\newcommand{\bsa}{{\boldsymbol{a}}}
\newcommand{\bsb}{{\boldsymbol{b}}}
\newcommand{\bst}{{\boldsymbol{t}}}
\newcommand{\bsw}{{\boldsymbol{w}}}
\newcommand{\bsx}{{\boldsymbol{x}}}
\newcommand{\bsy}{{\boldsymbol{y}}}
\newcommand{\bsz}{{\boldsymbol{z}}}
\newcommand{\bsT}{{\boldsymbol{T}}}
\newcommand{\bszero}{{\boldsymbol{0}}} 
\newcommand{\bsone}{{\boldsymbol{1}}}  
\newcommand{\bsgamma}{{\boldsymbol{\gamma}}}
\newcommand{\bstau}{{\boldsymbol{\tau}}}
\newcommand{\rd}{{\mathrm{d}}}
\newcommand{\bbE}{{\mathbb{E}}}
\newcommand{\bbN}{{\mathbb{N}}}
\newcommand{\bbP}{{\mathbb{P}}}
\newcommand{\bbR}{{\mathbb{R}}}
\DeclareSymbolFont{bbold}{U}{bbold}{m}{n}
\DeclareSymbolFontAlphabet{\mathbbold}{bbold}
\newcommand{\calH}{{\mathcal{H}}}
\newcommand{\calK}{{\mathcal{K}}}
\newcommand{\setu}{{\mathfrak{u}}}
\newcommand{\setv}{{\mathfrak{v}}}
\newcommand{\KaKlScSuKME}{\mathsf{KME}}
\newcommand{\KaKlScSuquark}{\setbox0\hbox{$x$}\hbox to\wd0{\hss$\cdot$\hss}}
\begin{document}
	
\mainmatter              
\title{Lattice Rules Meet Kernel Cubature}
\titlerunning{Lattice Rules Meet Kernel Cubature}  
%
\author{Vesa Kaarnioja\inst{1,2} \and Ilja Klebanov\inst{2}\and
	Claudia Schillings\inst{2}  \and Yuya Suzuki\inst{3} }
\authorrunning{Vesa Kaarnioja et al.} 
%
\tocauthor{Vesa Kaarnioja, Ilja Klebanov, Claudia Schillings, and Yuya Suzuki}
\institute{School of Engineering Sciences, LUT University, P.O.~Box 20, 53851 Lappeenranta, Finland,\\
	{\tt vesa.kaarnioja@iki.fi}
	\and
	Department of Mathematics and Computer Science, Free University of Berlin, Arnimallee 6, 14195 Berlin, Germany,\\
	{\tt klebanov@zedat.fu-berlin.de, c.schillings@fu-berlin.de}
	\and
	Department of Mathematics and Systems Analysis, Aalto University, P.O.~Box 11100, 00076 Aalto, Finland,\\
	{\tt yuya.suzuki@aalto.fi}
}

\maketitle              

\begin{abstract}
	Rank-1 lattice rules are a class of equally weighted quasi-Monte Carlo methods that achieve essentially linear convergence rates for functions in a reproducing kernel Hilbert space (RKHS) characterized by square-integrable first-order mixed partial derivatives.
	In this work, we explore the impact of replacing the equal weights in lattice rules with optimized cubature weights derived using the reproducing kernel.
	We establish a theoretical result demonstrating a doubled convergence rate in the one-dimensional case and provide numerical investigations of convergence rates in higher dimensions.
	We also present numerical results for an uncertainty quantification problem involving an elliptic partial differential equation with a random coefficient.
	\keywords{kernel cubature, lattice rule, high-dimensional approximation, higher-order method, uncertainty quantification, partial differential equation}
\end{abstract}

\section{Introduction}
Computing the expected value $\mathbb E_{\mathbb P}[f]$ of a function $f\colon D \to \mathbb R$ over a domain $D \subseteq \mathbb R^s$ with respect to a probability distribution $\mathbb P$ is a fundamental problem in fields such as uncertainty quantification, machine learning, statistics, financial mathematics, and statistical mechanics. Since these integrals are often intractable analytically, they are approximated numerically using an empirical mean:
\begin{equation}
	\label{KaKlScSu:equ:empirical_mean_approximation}
	\mathbb E_{\mathbb P}[f]
	=
	\int_D f(x)\,{\rm d}\mathbb P(x)
	\approx
	\sum_{k=0}^{n-1} w_k f(\bst_k)
	=
	\bbE_{\mathbb P_{\bsT}^{\boldsymbol w}}[f],
	\qquad
	\bbP_{\bsT}^{\bsw} := \sum_{k=0}^{n-1} w_{k} \delta_{\bst_{k}}.
\end{equation}
The central challenge in constructing higher-order cubature methods lies in the careful selection of evaluation points $\bsT = (\bst_k)_{k=0}^{n-1} \in D^n$ and weights $\bsw = (w_k)_{k=0}^{n-1} \in \bbR^n$ to ensure favorable approximation properties of the error $|\mathbb E_{\mathbb P}[f] - \mathbb E_{\mathbb P_{\bsT}^{\boldsymbol w}}[f]|$.

To address this challenge for potentially high-dimensional integration problems, one can either use sampling-based methods or numerical cubature rules. Sampling-based approaches include methods like Markov chain Monte Carlo (MCMC), which construct a Markov chain with the target distribution $\mathbb{P}$ as its stationary distribution, and importance sampling, which modifies the probability measure to reduce variance and often enables direct sampling.
In contrast, numerical cubature methods such as sparse grids and quasi-Monte Carlo (QMC) methods construct the nodes $\bsT$ and weights $\bsw$ deterministically and can achieve faster convergence rates under sufficient smoothness assumptions on the integrand.
While QMC is fundamentally deterministic, in our numerical experiments (Section~\ref{KaKlScSu:sec:numerics}) we use a randomized QMC method based on several \emph{random shifts} of a fixed lattice which is common practice in QMC.
We focus on a particularly simple QMC rule in which $\bsT$ is chosen to be a lattice, as introduced in Section~\ref{KaKlScSu:subsec:lattice_rules_tent_transform}.

In this work, we consider cubature rules with a known convergence rate in a reproducing kernel Hilbert space (RKHS) $\mathcal{H}$. We interpret the cubature rule as an element of the subspace $V_{\bsT} = \mathrm{span}(K(\bst_0,\KaKlScSuquark), \ldots, K(\bst_{n-1},\KaKlScSuquark)) \subseteq \mathcal{H}$, spanned by the reproducing kernel with its first argument fixed at the cubature nodes $\bst_k$.

Specifically, we focus on the \emph{kernel mean embeddings} of $\mathbb{P}$ and $\mathbb{P}_{\bsT}^{\boldsymbol{w}^{\ast}}$:
\[
h
=
\int_D K(\bsx, \KaKlScSuquark) \, \mathrm{d}\mathbb{P}(\bsx) \in \mathcal{H},
\qquad
h_{\bsT}^{\bsw^{\ast}}
=
\sum_{k=0}^{n-1} w_k^* K(\bst_k, \KaKlScSuquark) \in V_{\bsT} \subseteq \mathcal{H},
\]
and choose the weights $\bsw^*$ so that $h_{\bsT}^{\bsw^{\ast}}$ becomes the $\mathcal{H}$-orthogonal projection (or equivalently, the kernel interpolant, cf.\ Lemma \ref{KaKlScSu:lemma:orthogonal_projection_coincides_with_interpolation_in_RKHS}) of $h$ onto $V_{\bsT}$.
We refer to the resulting cubature rule $Q_{\bsT}^{\bsw^{\ast}} f := \sum_{k=0}^{n-1} w_{k}^{\ast} f(\bst_{k})$ as \emph{kernel cubature}.
By the reproducing-type properties (cf.\ \eqref{KaKlScSu:equ:KME_reproducing_property} below) of $h$ and $h_{\bsT}^{\bsw^{\ast}}$,
\[
\mathbb{E}_{\mathbb{P}}[f] = \langle f, h \rangle_{\mathcal{H}},
\qquad
\mathbb{E}_{\mathbb{P}_{\bsT}^{\bsw^{\ast}}}[f] = \langle f, h_{\bsT}^{\bsw^{\ast}} \rangle_{\mathcal{H}},
\]
the choice of $\bsw^*$ ensures that these two expected values are close, yielding a cubature rule as in \eqref{KaKlScSu:equ:empirical_mean_approximation} with favorable approximation properties.
In fact, it guarantees that the worst-case error
\begin{equation}
	\label{KaKlScSu:equ:WCE_general_form_in_RKHS}
	e(Q_{\bsT}^{\bsw} , \calH)
	:=
	\sup_{\| f \|_{\calH}=1} | \mathbb{E}_{\mathbb P}[f] - Q_{\bsT}^{\bsw} f|
	=
	\sup_{\| f \|_{\calH}=1} \langle h - h_{\bsT}^{\bsw} , f \rangle_{\calH}
	=
	\| h - h_{\bsT}^{\bsw} \|_{\calH}
\end{equation}
is minimized by $\bsw^{\ast}$, where
$Q_{\bsT}^{\bsw} f := \bbE_{\bbP_{\bsT}^{\bsw}}[f]$.
While conventional QMC analysis primarily focuses on the worst-case error, we aim to achieve an additional improvement in the convergence of the approximation error, motivated by the observation (cf.\ Proposition~\ref{KaKlScSu:prop:approximation_error_additional_gain}) that, for a fixed $f \in \calH$,
\begin{equation}
	\label{KaKlScSu:equ:approximation_error_additional_gain}
	|\mathbb{E}_{\mathbb{P}}[f] - Q_{\bsT}^{\bsw^{\ast}}f|
	\leq
	e(Q_{\bsT}^{\bsw^{\ast}}, \calH)\,
	\mathrm{dist}_{\calH}(f,V_{\bsT}),
\end{equation}
a result that is unique to the optimally weighted cubature rule $Q_{\bsT}^{\bsw^{\ast}}$.
This expectation arises from the intuition that the distance $\mathrm{dist}_{\calH}(f,V_{\bsT}) = \inf_{v \in V_{\bsT}} \|{f-v}\|_{\calH}$ between $f$ and $V_{\bsT}$ should decrease as $n$ increases and $V_{\bsT}$ increasingly approximates $\calH$.
However, we cannot yet establish a precise rate for this decay.

The optimal weights $\bsw^{\ast}$ coincide with Bayesian cubature weights for the canonical choice of the prior
\cite{KaKlScSu:Briol2019probabilistic,KaKlScSu:Diaconis1988Bayesian,KaKlScSu:Ghahramani2002Bayesian,KaKlScSu:OHagan1991Bayes,KaKlScSu:Ritter2000average}.
The recent work of Hickernell and Jagadeeswaran \cite{KaKlScSu:Jagadeeswaran2019BayesianCubatureMatching,KaKlScSu:Jagadeeswaran2019BayesianCubatureLattice,KaKlScSu:Jagadeeswaran2022BayesianCubatureSobol} has investigated the construction of optimized cubature weights for fixed sequences of lattice points and Sobol$'$ nets, but only within the context of shift-invariant kernels and Walsh kernels.
We note that our construction is related to recent studies on kernel interpolation over lattice point sets~\cite{KaKlScSu:KKKNS2022,KaKlScSu:kks_serendip,KaKlScSu:sk23} in the sense that the kernel cubature of a function $f \in \calH$ is equivalent to computing the integral of its kernel interpolant over the cubature point set. However, our work addresses the non-periodic setting, while the works~\cite{KaKlScSu:KKKNS2022,KaKlScSu:kks_serendip,KaKlScSu:sk23} only discuss kernel interpolation over lattice point sets in the periodic setting.

\vspace{2ex}
\textbf{Contributions.}
We make the following contributions to weighted QMC methods:
\begin{enumerate}
	\item 
	We propose a weighted version of QMC cubature, where the weights minimize the distance between the kernel mean embeddings of the empirical and true distribution, leading to low approximation error, as observed in various numerical experiments across low and high dimensions.
	
	\item 
	While, by construction, the worst-case error $e(Q_{\bsT}^{\bsw^{\ast}}, \calH)$ is minimal among all possible weights $\bsw$, it shows only a slight improvement over the equally weighted case.  
	However, we observe a significant reduction in the approximation error $|\mathbb{E}_{\mathbb{P}}[f] - \mathbb{E}_{\mathbb{P}_{\bsT}^{\boldsymbol{w}}}[f]|$, which we attribute to the factor $\mathrm{dist}_{\calH}(f,V_{\bsT})$ in the bound \eqref{KaKlScSu:equ:approximation_error_additional_gain}.  
	Unlike the constant factor $\| f \|_{\calH}$ in the classical bound for equally weighted QMC, $\mathrm{dist}_{\calH}(f,V_{\bsT})$ can be expected to decrease as $n$ increases.
	
	\item 
	Although the explicit error convergence rates for optimally weighted lattice point sets remain an open problem in higher dimensions, we prove in the one-dimensional case that this approach leads to a doubled rate of convergence compared to the equally weighted case.

	\item 
	We numerically investigate the behavior of the worst-case error in Sobolev spaces of higher smoothness $\alpha = 4$ using weights optimized for the less smooth setting $\alpha = 2$.
	A significant improvement in convergence speed is observed.
	This experiment is conducted in low dimensions only ($s=2$) with tent-transformed lattices, which are known to enhance the convergence rate from first to second order.
\end{enumerate}

\textbf{Outline.}
This document is structured as follows.
After introducing our setup and notation in Section~\ref{KaKlScSu:sec:notation}, we provide our theoretical contributions in Section~\ref{KaKlScSu:sec:theory}.
In Section~\ref{KaKlScSu:sec:numerics} we numerically demonstrate the improvement of kernel cubature over equally weighted lattice rules and provide a conclusion in Section~\ref{KaKlScSu:sec:conclusion}.

\section{Preliminaries and Notation}
\label{KaKlScSu:sec:notation}
Throughout this manuscript, we will use the following general notation:
$D\subseteq \bbR^{s}$, $s\in\bbN$, will be the domain of interest equipped with its Borel $\sigma$-algebra and a probability measure $\bbP$, typically $D = [0,1]^{s}$ with uniform measure $\bbP = \mathsf{Unif}_{D}$.
We denote by $\{ v \} := (v_{j} - \lfloor v_{j} \rfloor)_{j=1,\dots,s}$ the componentwise fractional part of a vector $v \in \mathbb{R}^{s}$, by $\mathbf 1 = (1)_{k=0}^{n-1}$ the $n$-dimensional unit vector, and by $\mathbbm{1} \colon D \to \bbR$ the constant unit function.
Further, for a function $f \in \calH$ we denote
\begin{align*}
	If
	&=
	\int_{D} f \, \mathrm{d} \bbP = \bbE_{\bbP}[f],
	&&
	\\
	Q_{\bsT} f
	&=
	n^{-1} \sum_{k=0}^{n-1} f(\bst_{k}) = \bbE_{\bbP_{\bsT}}[f],
	&
	\bbP_{\bsT}
	&=
	n^{-1} \sum_{k=0}^{n-1} \delta_{\bst_{k}},
	\\
	Q_{\bsT}^{\bsw} f
	&=
	\sum_{k=0}^{n-1} w_{k} f(\bst_{k}) = \bbE_{\bbP_{\bsT}^{\bsw}}[f],
	&
	\bbP_{\bsT}^{\bsw}
	&=
	\sum_{k=0}^{n-1} w_{k} \delta_{\bst_{k}},
\end{align*}
where the cubature rules $Q_{\bsT}$ and $Q_{\bsT}^{\bsw}$ are based on the \emph{evaluation points} $\bsT = (\bst_{k})_{k=0}^{n-1} \in D^{n}$, which in this work will be a (potentially shifted and tent-transformed, cf.\ Section~\ref{KaKlScSu:subsec:lattice_rules_tent_transform}) lattice, and \emph{cubature weights}, $\bsw = (w_{k})_{k=0}^{n-1} \in \bbR^{n}$.
Here, $\bbP_{\bsT}$ and $\bbP_{\bsT}^{\bsw}$ denote the corresponding (possibly signed) discrete measures on $D$.
Note that we make no assumptions on the cubature weights to be non-negative or to sum to one. This is in line with common practice in Bayesian cubature, where the primary objective is optimization rather than strictly enforcing a probabilistic interpretation of the weights.
Consequently, the resulting measures, while being finite, may fail to be probability measures, can attain negative values, and may introduce a bias.

\subsection{Reproducing Kernel Hilbert Spaces and Kernel Mean Embeddings}

The (possibly signed) measures $\bbP,\bbP_{\bsT},\bbP_{\bsT}^{\bsw}$ will be \emph{embedded} into a reproducing kernel Hilbert space (RKHS; \cite{KaKlScSu:berlinet2004rkhs}) $\calH$ corresponding to a symmetric and positive definite kernel
$K \colon D\times D \to \bbR$.
Note that we work with \emph{strictly} positive definite kernels rather than semi-positive definite ones in the sense that the \emph{Gram matrix} $G=(K(\bsx_i,\bsx_j))_{i,j=1}^N$ is (\emph{strictly}) positive definite, and thereby invertible, for all $N\in\mathbb N$ and pairwise distinct $\bsx_i\in D$, $i=1,\dots,N$.
For a signed measure $\mu$ on $D$ its kernel mean embedding ($\KaKlScSuKME$) is defined by
\[
\KaKlScSuKME(\mu)
:=
\int_{D} K(\bsx,\KaKlScSuquark) \, \mathrm{d} \mu(\bsx)
\in
\calH.
\]
Strictly speaking, the KME is defined only for certain combinations of kernels and signed measures \cite{KaKlScSu:berlinet2004rkhs}, in particular, the corresponding integral must be well defined.
We omit these technical details here, as the assumptions are always fulfilled for the measures and kernels considered in this paper.
Importantly, the KME satisfies a reproducing-type property \cite{KaKlScSu:smola2007embedding}
\begin{equation}
	\label{KaKlScSu:equ:KME_reproducing_property}    
	\langle \KaKlScSuKME(\mu) , f \rangle_{\calH}
	=
	\bbE_{\mu}[f],
	\qquad
	f \in \calH.
\end{equation}
After defining
$V_{\bsT} = \mathrm{span}(K(\bst_0,\KaKlScSuquark), \ldots, K(\bst_{n-1},\KaKlScSuquark)) \subseteq \mathcal{H}$
and denoting by $P_{V_{\bsT}}\colon \calH \to V_{\bsT}$ the corresponding $\calH$-orthogonal projection, the following embeddings will be crucial:
\begin{align*}
	h
	&=
	\KaKlScSuKME(\bbP)
	=
	\int_{D} K(\bsx,\KaKlScSuquark) \bbP(\mathrm d \bsx) \in \calH,
	\\
	h_{\bsT}
	&=
	\KaKlScSuKME(\bbP_{\bsT})
	=
	\int_{D} K(\bsx,\KaKlScSuquark) \bbP_{\bsT}(\mathrm d\bsx)
	=
	n^{-1} \sum_{k=0}^{n-1} K(\bst_{k},\KaKlScSuquark) \in V_{\bsT} \subseteq \calH,
	\\
	h_{\bsT}^{\bsw}
	&=
	\KaKlScSuKME(\bbP_{\bsT}^{\bsw})
	=
	\int_{D} K(\bsx,\KaKlScSuquark) \bbP_{\bsT}^{\bsw}(\mathrm d\bsx)
	=
	\sum_{k=0}^{n-1} w_{k} K(\bst_{k},\KaKlScSuquark) \in V_{\bsT} \subseteq \calH.
\end{align*}

\subsection{Kernel Cubature}

The basic idea of this paper is to view $h$ and $h_{\bsT}^{\bsw}$ as representing the integration operator and the cubature rule from \eqref{KaKlScSu:equ:empirical_mean_approximation} as elements in $\calH$---after all, by \eqref{KaKlScSu:equ:KME_reproducing_property}, 
$\mathbb E_{\mathbb P}[f]
=
\langle h , f \rangle_{\calH}$
and
$\bbE_{\mathbb P_{\bsT}^{\boldsymbol w}}[f]
=
\langle h_{\bsT}^{\boldsymbol w} , f \rangle_{\calH}$.
Hence, in order to reduce the approximation error in \eqref{KaKlScSu:equ:empirical_mean_approximation}, it seems natural to choose the weights $\bsw$ such that $h_{\bsT}^{\bsw}$ is the best approximation of $h$ in $\calH$, that is, its orthogonal projection onto $V_{\bsT}$.
It is well-known \cite[Lemma~10.24]{KaKlScSu:W2005} that such orthogonal projections within RKHSs correspond to interpolation:
\begin{lemma}
	\label{KaKlScSu:lemma:orthogonal_projection_coincides_with_interpolation_in_RKHS}
	Let $\bsT = (\bst_k)_{k=0}^{n-1} \in D^n$ be any point set in $D$.
	The orthogonal projection $\hat{g} = P_{V_{\bsT}} g$ of each $g \in \calH$ onto
	$V_{\bsT} = \mathrm{span}(K(\bst_0,\KaKlScSuquark), \ldots, K(\bst_{n-1},\KaKlScSuquark)) \subseteq \mathcal{H}$
	coincides with the unique solution of the following interpolation problem:
	find $\hat{g} \in V_{\bsT}$ such that
	$\hat{g}(\bst_{k})
	=
	g(\bst_{k})$
	for
	$k = 0,\dots,n-1$.
	In particular, $h_{\bsT}^{\bsw^{\ast}} = P_{V_{\bsT}} h$ is given by the unique solution $\bsw^{\ast}$ of
	\begin{equation}
		\label{KaKlScSu:eq:gramsystem}
		\calK_{\bsT} \bsw^{\ast} = (h(\bst_{k}))_{k=0}^{n-1},
	\end{equation}
	where $\calK_{\bsT} = (K(\bst_{k},\bst_{\ell}))_{k,\ell=0}^{n-1}$ is the Gram matrix.
\end{lemma}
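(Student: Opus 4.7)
The plan is to combine the reproducing property $v(\bst_k) = \langle v, K(\bst_k,\KaKlScSuquark)\rangle_{\calH}$ (valid for every $v \in \calH$) with the variational characterization of the orthogonal projection $\hat g = P_{V_{\bsT}} g$, namely $\langle g - \hat g, v \rangle_{\calH} = 0$ for all $v \in V_{\bsT}$. Since $\{K(\bst_k,\KaKlScSuquark)\}_{k=0}^{n-1}$ spans $V_{\bsT}$, this orthogonality is equivalent to $\langle g - \hat g, K(\bst_k,\KaKlScSuquark)\rangle_{\calH} = 0$ for each $k$, which by the reproducing property becomes $(g - \hat g)(\bst_k) = 0$. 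So any orthogonal projection onto $V_{\bsT}$ necessarily interpolates $g$ on the nodes $\bsT$.

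Next I would show that the interpolation problem has a unique solution in $V_{\bsT}$. Writing a generic element $v \in V_{\bsT}$ as $v = \sum_{\ell=0}^{n-1} c_{\ell} K(\bst_{\ell},\KaKlScSuquark)$, the interpolation conditions $v(\bst_k) = g(\bst_k)$ for $k=0,\dots,n-1$ translate into the linear system $\calK_{\bsT}\bsc = (g(\bst_k))_{k=0}^{n-1}$. Strict positive definiteness of $K$ (as assumed in Section~\ref{KaKlScSu:sec:notation} for pairwise distinct nodes) guarantees that $\calK_{\bsT}$ is invertible, hence existence and uniqueness of the interpolant. Since any projection onto $V_{\bsT}$ must satisfy those interpolation equations by the first step, and the interpolant is unique, the two coincide: $P_{V_{\bsT}} g = \hat g$.

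Specialising to $g = h$, the coefficient vector $\bsw^{\ast}$ representing $P_{V_{\bsT}} h = h_{\bsT}^{\bsw^{\ast}} = \sum_{\ell} w_{\ell}^{\ast} K(\bst_{\ell},\KaKlScSuquark)$ satisfies $\calK_{\bsT}\bsw^{\ast} = (h(\bst_k))_{k=0}^{n-1}$, which is exactly~\eqref{KaKlScSu:eq:gramsystem}.

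This is a textbook RKHS identity (e.g.~\cite[Lemma~10.24]{KaKlScSu:W2005}) and the argument is essentially mechanical once the reproducing property is invoked on both sides. The only subtle point — and thus the closest thing to an obstacle — is the tacit assumption that the lattice nodes $\bst_k$ are pairwise distinct, so that the strict positive definiteness of $K$ really yields an invertible Gram matrix; this is harmless for the rank-1 lattice rules considered in the paper.
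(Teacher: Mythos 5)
Your argument is correct and is precisely the standard one: the paper itself does not spell out a proof but simply cites \cite[Lemma~10.24]{KaKlScSu:W2005}, whose proof is exactly this combination of the reproducing property with the orthogonality characterization of $P_{V_{\bsT}}$, plus invertibility of $\calK_{\bsT}$ for pairwise distinct nodes. Your closing caveat about distinct nodes is also the right one, and it is covered by the paper's standing assumption of strictly positive definite kernels (and is the reason the paper adds a random shift before the tent transform in Section~\ref{KaKlScSu:subsec:numerics_WCE}).
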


\begin{definition}   
	We refer to the cubature weights $\bsw^{\ast}$ given by \eqref{KaKlScSu:eq:gramsystem} as \emph{optimal weights} and to the corresponding weighted cubature rule $Q_{\bsT}^{\bsw^{\ast}}$ as \emph{kernel cubature}.
\end{definition}

Another advantage of the embeddings $h,h_{\bsT}^{\bsw^{\ast}}$ is that the norm of their difference naturally describes the worst-case cubature error \eqref{KaKlScSu:equ:WCE_general_form_in_RKHS}.
Since the optimal weights $\bsw^{\ast}$ stem from an orthogonal projection, the Pythagorean theorem implies
\begin{equation}
	\label{KaKlScSu:equ:optimal_wce_formula}
	e(Q_{\bsT}^{\bsw^{\ast}} , \calH)^{2}
	=
	\| h - h_{\bsT}^{\bsw^{\ast}} \|_{\calH}^{2}
	=
	\| h \|_{\calH}^{2} -  \| h_{\bsT}^{\bsw^{\ast}} \|_{\calH}^{2}
	=
	\| h \|_{\calH}^{2} - (w^{\ast})^{\top} \calK_{\bsT} w^{\ast}.
\end{equation}
We will see in the next subsection that, for the kernels $K$ considered in this paper, $h = \mathbbm{1}$ turns out to be the constant unit function and \eqref{KaKlScSu:equ:optimal_wce_formula} reduces to $e(Q_{\bsT}^{\bsw} , \calH)^{2}
= 1 - \sum_{k=0}^{n-1} w_{k}^{\ast}$, making it easily computable once the weights are established.

\subsection{Specific Kernels and the Corresponding Sobolev Spaces}\label{KaKlScSu:subsec:sobolev}

In this paper, we consider the Sobolev space of dominating-mixed smoothness, see \cite[Section~2.4]{KaKlScSu:GSY2019} and \cite[Definition~1]{KaKlScSu:NS2023}.

\begin{definition}[Sobolev space of dominating mixed smoothness]%
	Let $s\in\mathbb Z_+$ and let $\boldsymbol{\gamma}=(\gamma_{\mathrm{\mathfrak{u}}})_{\mathrm{\mathfrak{u}}\subseteq\{1,\ldots,s\}}$ be a sequence of positive weights, termed \emph{coordinate weights}. The weighted Sobolev space $\calH^{\alpha}_{s,\boldsymbol{\gamma}}$ of order $\alpha \in\mathbb Z_+$ is a reproducing kernel Hilbert space with inner product\label{KaKlScSu:def:sov}
	{\fontsize{9}{11}
			\begin{align*}
				&\langle f , g \rangle _{\calH^{\alpha}_{s,\boldsymbol{\gamma}}}
				:=
				\left(\int_{[0,1]^s} f(\bsx)\rd \bsx\right)\;  \left(\int_{[0,1]^s} g(\bsx)\rd \bsx\right) +
				\\
				&\sum_{\emptyset\ne\mathrm{\mathfrak{u}}\subseteq\{1,\ldots,s\}}  \kern-5mm \gamma^{-1}_{\mathrm{\mathfrak{u}}}
				\sum_{ \substack{ \bstau \in \{0,\ldots,\alpha \}^{|\mathfrak{u}|} \\ \setv := \{j: \tau_j=\alpha \}} }  
				\int_{[\bszero_\setv,\bsone_\setv]} 
				\left(\int_{[\bszero_{-\setv},\bsone_{-\setv}]} \kern-8mm f^{(\bstau,\bszero_{-\mathrm{\mathfrak{u}}})} (\bsx) \rd \bsx_{-\setv} 
				\right)
				\left(\int_{[\bszero_{-\setv},\bsone_{-\setv}]}  \kern-8mm g^{(\bstau,\bszero_{-\mathrm{\mathfrak{u}}})} (\bsx) \rd \bsx_{-\setv} 
				\right)
				\kern-0.7mm
				\rd \bsx_\setv
				,
			\end{align*}
		}where for $\setv \subseteq \{1,\ldots,s\}$ we write $[\bsa_\setv,\bsb_\setv] := \prod_{j\in \setv} [a_j,b_j]$ and likewise for $-\setv := \{1,\ldots,s\} \setminus \setv$, $[\bsa_{-\setv},\bsb_{-\setv}]
		= \prod_{j \in -\setv} [a_j,b_j]
		:= \prod_{j \in \{1,\ldots,s\} \setminus \setv}
		[a_j,b_j]$.
		The reproducing kernel is given by
		\begin{align}\label{KaKlScSu:eq:kern}
			K^{\alpha}_{s,\boldsymbol{\gamma}} (\bsx,\bsy)
			&:=
			1+
			\sum_{\emptyset\ne\mathrm{\mathfrak{u}}\subseteq\{1,\ldots,s\}} \gamma_{\mathrm{\mathfrak{u}}}\prod_{j\in \mathrm{\mathfrak{u}}} \left( -1 + K^{\alpha}_{1,1}(x_j,y_j)\right)
			,
		\end{align} 
		with
		\begin{align*}
			K^{\alpha}_{1,1}(x_j,y_j)=
			1+\sum_{\tau=1}^\alpha \frac{B_{\tau}(x_j)}{\tau!} \, \frac{B_{\tau}(y_j)}{\tau!} + (-1)^{\alpha+1} \frac{\widetilde{B}_{2\alpha}(x_j-y_j)}{(2\alpha)!},
		\end{align*}
		where $\widetilde{B}_{2\alpha}$ is the $1$-periodic Bernoulli polynomial of order $2\alpha$, i.e., denoting by $\{x-y\}$ the fractional part of $x-y$ and using the standard Bernoulli polynomial $B_{2\alpha}$ we define
		\[
		\widetilde{B}_{2\alpha}(x-y):=B_{2\alpha}(\{x-y\}).
		\]
	\end{definition}
	
	\begin{remark}
		The term ``coordinate weights'' is used to help distinguish the weights $\boldsymbol{\gamma}$ from the cubature weights $\bsw$.
	\end{remark}
	
	We remark that, since Bernoulli polynomials $B_{\alpha}(x),\; \alpha\ge 1,$ integrate to $0$ over the unit interval $[0,1]$, we have
	\[
	\int_0 ^1 K^{\alpha}_{1,1}(x_j,\KaKlScSuquark) \, {\rm d} x_j 
	=
	\mathbbm{1},
	\quad
	\text{ hence,}
	\quad
	\int_{[0,1]^s} K^{\alpha}_{s,\boldsymbol{\gamma}} (\bsx,\KaKlScSuquark) \, {\rm d} \bsx
	=
	\mathbbm{1}.
	\]
	Due to this fact, we can represent the worst-case error \eqref{KaKlScSu:equ:WCE_general_form_in_RKHS} by
	\begin{align}
		\label{KaKlScSu:eq:wce_general}
		\begin{split}
			e(Q_{\bsT}^{\bsw} , \calH^{\alpha}_{s,\boldsymbol{\gamma}})^{2}
			&=
			\langle \mathbbm{1} , \mathbbm{1} \rangle_{\calH^{\alpha}_{s,\boldsymbol{\gamma}}}
			-
			2 \sum_{k=0}^{n-1}w_k \langle \mathbbm{1} , K^{\alpha}_{s,\boldsymbol{\gamma}}(\bst_k,\KaKlScSuquark) \rangle_{\calH^{\alpha}_{s,\boldsymbol{\gamma}}}
			\\
			&\hspace{1em}
			+
			\sum_{k=0}^{n-1}\sum_{k'=0}^{n-1} w_{k}w_{k'} \langle K^{\alpha}_{s,\boldsymbol{\gamma}} (\bst_k,\KaKlScSuquark) , K^{\alpha}_{s,\boldsymbol{\gamma}} (\bst_{k'},\KaKlScSuquark) \rangle_{\calH^{\alpha}_{s,\boldsymbol{\gamma}}}
			\\
			&=
			1-2\sum_{k=0}^{n-1}w_k+\sum_{k=0}^{n-1}\sum_{k'=0}^{n-1} w_{k}w_{k'} K^{\alpha}_{s,\boldsymbol{\gamma}} (\bst_k,\bst_{k'}),
		\end{split}
	\end{align}
	which for equal and optimal weights reduces to
	\begin{equation}\label{KaKlScSu:eq:wce-weights}
		e(Q_{\bsT}^{\bsw} , \calH^{\alpha}_{s,\boldsymbol{\gamma}})^{2}
		=
		\begin{cases}
			-1 + n^{-2} \sum_{k,k'=0}^{n-1} K^{\alpha}_{s,\boldsymbol{\gamma}} (\bst_k,\bst_{k'})
			&
			\text{if } \bsw = (n^{-1})_{k=0}^{n-1},
			\\
			1 - \sum_{k=0}^{n-1} w_{k}^{\ast}
			&
			\text{if } \bsw = \bsw^{\ast}.
		\end{cases}
	\end{equation}

	\subsection{Lattice Points and the Tent Transform}
	\label{KaKlScSu:subsec:lattice_rules_tent_transform}
	
	Let $\tilde{\bsT} = (\tilde{\bst}_k)_{k=0}^{n-1} \subset [0,1]^s$ be a lattice point set defined by
	\[
	\tilde{\bst}_k = \left\{ \frac{k \boldsymbol{z}}{n} \right\}, \quad k = 0, \ldots, n - 1,
	\]
	where $\{\KaKlScSuquark\}$ denotes the componentwise fractional part and $\boldsymbol{z}$ is the so-called \emph{generating vector}, which consists of $s$ elements of integers, $\boldsymbol{z}\in\{1, \ldots, n-1\}^s$.
	In Section~\ref{KaKlScSu:sec:numerics}, our cubature rules $Q_{\bsT}$ and $Q_{\bsT}^{\bsw}$ will be based on a point set $\bsT = (\bst_k)_{k=0}^{n-1} \subset [0,1]^s$ defined in one of the following ways:
	\begin{itemize}
		\item
		as the unshifted lattice point set itself, $\bsT = \tilde{\bsT}$;
		\item
		as a randomly shifted lattice point set, defined by $\bst_k = \{ \tilde{\bst}_k + \Delta \}$, where $\Delta$ is a fixed random shift sampled uniformly from $[0,1]^s$;
		\item
		or as a randomly shifted and tent-transformed lattice point set, defined by $\bst_k = \phi(\{ \tilde{\bst}_k + \Delta \})$, where the so-called \emph{baker's transform} is applied componentwise:
		\[
		\phi(\boldsymbol{t}) := (\phi(t_1), \ldots, \phi(t_s)), \quad
		\phi(t) := 1 - |2t - 1|, \quad t \in \mathbb{R}.
		\]
	\end{itemize}
	We refer to \cite{KaKlScSu:DKP2022,KaKlScSu:DKS2013,KaKlScSu:N1992,KaKlScSu:SJ1994} for the general theory of numerical integration using lattice rules, and to \cite{KaKlScSu:DNP2014,KaKlScSu:GSY2019,KaKlScSu:H2002} for the use of tent-transformed lattice rules in the context of non-periodic functions.
	Our motivation for using tent-transformed lattice rules stems from their ability to achieve second-order convergence in Sobolev spaces of smoothness $\alpha = 2$; see {\cite[Corollary~1]{KaKlScSu:GSY2019}}.

	\section{Theoretical Considerations}
	\label{KaKlScSu:sec:theory}
	
	Classical QMC theory typically bounds the approximation error by the inequality
	\[
	|I f - Q_{\bsT}^{\bsw} f|
	\leq
	e(Q_{\bsT}^{\bsw}, \calH)\, \| f \|_{\calH},
	\qquad
	f \in \calH.
	\]
	Since the second factor $\| f \|_{\calH}$ is constant with respect to $n$ for fixed $f$ (often normalized to one for simplicity), the error is ultimately controlled by the worst-case error $e(Q_{\bsT}^{\bsw}, \calH)$.
	While $\bsw^{\ast}$ minimizes the worst-case error among all possible weights,
	\[
	e(Q_{\bsT}^{\bsw^{\ast}}, \calH)
	\leq
	e(Q_{\bsT}^{\bsw}, \calH)
	\qquad
	\text{for all }
	\bsw \in \bbR^{n},
	\]
	its construction via an orthogonal projection further improves this bound by replacing the constant factor $\| f \|_{\calH}$ with the distance $\mathrm{dist}_{\calH}(f,V_{\bsT})$, which can be expected to decrease as $n$ grows and $V_{\bsT}$ increasingly approximates $\calH$:
	
	\begin{proposition}
		\label{KaKlScSu:prop:approximation_error_additional_gain}
		Let $\bsT = (\bst_k)_{k=0}^{n-1} \in D^n$ be any point set in $D$ and $\calH$ be an RKHS with kernel $K\colon D\times D \to \mathbb{R}$.
		Let
		$V_{\bsT} = \mathrm{span}(K(\bst_0,\KaKlScSuquark), \ldots, K(\bst_{n-1},\KaKlScSuquark)) \subseteq \mathcal{H}$
		and let $\bsw^{\ast}$ satisfy \eqref{KaKlScSu:eq:gramsystem}.
		Then
		\begin{equation*}
			|If -  Q_{\bsT}^{\bsw^{\ast}} f|
			\leq
			e(Q_{\bsT}^{\bsw^{\ast}}, \calH)\,
			\mathrm{dist}_{\calH}(f,V_{\bsT}),
		\end{equation*}
		where $\mathrm{dist}_{\calH}(f,V_{\bsT})$ denotes the distance between $f$ and $V_{\bsT}$ in $\calH$.
	\end{proposition}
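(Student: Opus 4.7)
The plan is to express the cubature error as an inner product using the reproducing-type property of the kernel mean embeddings, then exploit the fact that the error representer $h - h_{\bsT}^{\bsw^{\ast}}$ lies in $V_{\bsT}^{\perp}$ by construction, which lets us insert any element of $V_{\bsT}$ for free before applying Cauchy--Schwarz.

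Concretely, using \eqref{KaKlScSu:equ:KME_reproducing_property} for the measures $\bbP$ and $\bbP_{\bsT}^{\bsw^{\ast}}$, I would first write
\[
If - Q_{\bsT}^{\bsw^{\ast}} f
=
\langle h , f \rangle_{\calH} - \langle h_{\bsT}^{\bsw^{\ast}} , f \rangle_{\calH}
=
\langle h - h_{\bsT}^{\bsw^{\ast}} , f \rangle_{\calH}.
\]
Next, by Lemma~\ref{KaKlScSu:lemma:orthogonal_projection_coincides_with_interpolation_in_RKHS}, the weights $\bsw^{\ast}$ satisfying \eqref{KaKlScSu:eq:gramsystem} are exactly those for which $h_{\bsT}^{\bsw^{\ast}} = P_{V_{\bsT}} h$, so that $h - h_{\bsT}^{\bsw^{\ast}} \in V_{\bsT}^{\perp}$. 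Hence $\langle h - h_{\bsT}^{\bsw^{\ast}} , v \rangle_{\calH} = 0$ for every $v \in V_{\bsT}$, and I may subtract an arbitrary $v \in V_{\bsT}$ inside the inner product:
\[
If - Q_{\bsT}^{\bsw^{\ast}} f
=
\langle h - h_{\bsT}^{\bsw^{\ast}} , f - v \rangle_{\calH}
\qquad
\text{for all } v \in V_{\bsT}.
\]

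Applying Cauchy--Schwarz and identifying $\| h - h_{\bsT}^{\bsw^{\ast}} \|_{\calH} = e(Q_{\bsT}^{\bsw^{\ast}}, \calH)$ via \eqref{KaKlScSu:equ:WCE_general_form_in_RKHS}, I get
\[
|If - Q_{\bsT}^{\bsw^{\ast}} f|
\leq
e(Q_{\bsT}^{\bsw^{\ast}}, \calH)\, \| f - v \|_{\calH}
\qquad
\text{for all } v \in V_{\bsT},
\]
and taking the infimum over $v \in V_{\bsT}$ yields the claim.

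There is no real obstacle: the argument is a two-line orthogonality-plus-Cauchy--Schwarz calculation, and every needed identity ($If = \langle h, f\rangle_{\calH}$, $Q_{\bsT}^{\bsw^{\ast}}f = \langle h_{\bsT}^{\bsw^{\ast}}, f\rangle_{\calH}$, the projection property of $h_{\bsT}^{\bsw^{\ast}}$, and the worst-case-error formula) has already been established in the preceding subsections. The only conceptual point worth flagging is that the improvement from $\| f \|_{\calH}$ to $\mathrm{dist}_{\calH}(f,V_{\bsT})$ is genuinely special to the optimally weighted rule, since it is precisely the orthogonal-projection property $h - h_{\bsT}^{\bsw^{\ast}} \perp V_{\bsT}$ that permits the $v$-subtraction trick and fails for a generic weight vector $\bsw$.
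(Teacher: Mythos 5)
Your proposal is correct and follows essentially the same argument as the paper: rewrite the error as $\langle h - h_{\bsT}^{\bsw^{\ast}}, f\rangle_{\calH}$, use the orthogonality $h - h_{\bsT}^{\bsw^{\ast}} \perp V_{\bsT}$ coming from the projection property, and apply Cauchy--Schwarz. The only cosmetic difference is that you subtract an arbitrary $v \in V_{\bsT}$ and take the infimum at the end, whereas the paper directly inserts $v = P_{V_{\bsT}} f$, which realizes that infimum.
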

	
	\begin{proof}
		Since $h_{\bsT}^{\bsw^{\ast}} = P_{V_{\bsT}} h$, we obtain $h-h_{\bsT}^{\bsw^{\ast}} \perp V_{\bsT}$ and, by the Cauchy--Schwarz inequality
		\begin{align*}
			|If -  Q_{\bsT}^{\bsw^{\ast}} f|
			&=
			|\langle h-h_{\bsT}^{\bsw^{\ast}} , f \rangle_{\calH}|
			\\
			&=
			|\langle h-h_{\bsT}^{\bsw^{\ast}} , f  - P_{V_{\bsT}} f \rangle_{\calH}|
			\\
			&\leq
			\| h-h_{\bsT}^{\bsw^{\ast}} \|_{\calH} \, \| f - P_{V_{\bsT}} f \|_{\calH}
			\\
			&=
			e(Q_{\bsT}^{\bsw^{\ast}}, \calH)\, \mathrm{dist}_{\calH}(f,V_{\bsT}),
		\end{align*}
		proving the claim.\qed
	\end{proof}
	
	\begin{remark}
		We noted above that the distance $\mathrm{dist}_{\calH}(f, V_{\bsT})$ can be expected to decrease as $n$ increases. This is intuitive, as the space $V_{\bsT}$ becomes richer and more capable of approximating elements in $\calH$. However, to the best of our knowledge, no theoretical result is currently available that quantifies the rate of this convergence across arbitrary RKHSs. Quantitative rates are known in specific cases, for example for periodic Sobolev spaces and certain shift-invariant kernels, but do not appear to extend directly to the non-periodic Sobolev spaces of dominating mixed smoothness considered in this paper. There appear to be connections to Gaussian process regression on lattices, which is an active area of research \cite{KaKlScSu:Osborne2025Convergence,KaKlScSu:Teckentrup2020ConvergenceGP}.
	\end{remark}

	\subsection{Rate Doubling in the One-Dimensional Setting}
	
	In this subsection, we investigate the effect of using optimal weights for an (unshifted) lattice rule in dimension $s=1$, which simply corresponds to a left-Riemann rule, that is,
	$$
	t_k=\frac{k}{n},\quad k=0,\ldots,n-1.
	$$
	If $f\in \mathcal H_{1,\mathbf 1}^1$, then it is a consequence of~\eqref{KaKlScSu:eq:wce-weights} and the identity $\sum_{k,k'=0}^{n-1}K_{1,\mathbf 1}^1(\frac{k}{n},\frac{k'}{n})=\frac{3n^2+1}{3}$ that the equally weighted quadrature rule $Q_{\boldsymbol T} f =\frac1n\sum_{k=0}^{n-1}f(t_k)$ admits the error rate
	$$
	|I f - Q_{\boldsymbol T} f|=\mathcal O(n^{-1}).
	$$
	
	Defining the sequence of weights $(w_k^*)_{k=0}^{n-1}$ as the solution to the system~\eqref{KaKlScSu:eq:gramsystem} with $K_{1,\mathbf 1}^1(x,y)=1+\frac12B_2(|x-y|)+(x-1/2)(y-1/2)$, $x,y\in[0,1]$, denoting the one-dimensional kernel corresponding to $\mathcal H_{1,\mathbf 1}^1$, we obtain
	\[
	w_0^*=\frac{1}{2n}\frac{12n^3}{12n^3+n+3},
	\qquad
	w_k^*=2w_0^*,\quad k\in\{1,\ldots,n-2\},
	\qquad
	w_{n-1}^*=3w_0^*.
	\]
	In this special case, the optimally weighted quadrature rule $Q_{\boldsymbol T}^{\bsw^{\ast}} f = \sum_{k=0}^{n-1} w_{k}^{\ast} f(t_k)$ exhibits a quadratic error rate:
	\begin{lemma}\label{KaKlScSu:lemma:doubled}
		Suppose that $f\in \mathcal H_{1,\mathbf 1}^2$. 
		Then 
		$$
		|I f - Q_{\boldsymbol T}^{\boldsymbol w^*} f |=\mathcal O(n^{-2}).
		$$
	\end{lemma}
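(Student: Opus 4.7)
The plan is to apply Proposition~\ref{KaKlScSu:prop:approximation_error_additional_gain} in the RKHS $\calH = \calH^1_{1,\mathbf 1}$. Since $\calH^2_{1,\mathbf 1} \hookrightarrow \calH^1_{1,\mathbf 1}$, we may regard $f$ as an element of the latter space, and the claim reduces to establishing $e(Q_{\bsT}^{\bsw^*}, \calH^1_{1,\mathbf 1}) = O(n^{-1})$ and $\mathrm{dist}_{\calH^1_{1,\mathbf 1}}(f, V_{\bsT}) = O(n^{-1})\,\|f\|_{\calH^2_{1,\mathbf 1}}$, so that their product delivers the $O(n^{-2})$ rate.

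The worst-case error is immediate from the optimal-weight branch of (\ref{KaKlScSu:eq:wce-weights}): summing the explicitly given weights telescopes into $\sum_{k=0}^{n-1} w_k^* = (1 + 2(n-2) + 3)\,w_0^* = 2n\,w_0^* = 12n^3/(12n^3+n+3)$, so $e(Q_{\bsT}^{\bsw^*}, \calH^1_{1,\mathbf 1})^2 = 1 - \sum_k w_k^* = (n+3)/(12n^3+n+3) = O(n^{-2})$, hence $e(Q_{\bsT}^{\bsw^*}, \calH^1_{1,\mathbf 1}) = O(n^{-1})$.

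The distance estimate is the main technical step. I would take as approximant the kernel interpolant $\hat f = P_{V_{\bsT}} f$ provided by Lemma~\ref{KaKlScSu:lemma:orthogonal_projection_coincides_with_interpolation_in_RKHS}. The generator $K^1_{1,\mathbf 1}(t_k,\cdot) = 1 + (t_k - 1/2)(\cdot - 1/2) + \tfrac12 B_2(|t_k - \cdot|)$ is smooth away from $t_k$ and has only a first-derivative kink there (inherited from the $B_2$ term), so $V_{\bsT}$ is, modulo a two-dimensional polynomial correction, a continuous piecewise-quadratic spline space on the uniform grid $\{k/n\}_{k=0}^{n-1}$. Standard local polynomial-approximation estimates then give $\|f - \hat f\|_{\calH^1_{1,\mathbf 1}} \le C n^{-1}\|f''\|_{L^2([0,1])} \le C n^{-1}\|f\|_{\calH^2_{1,\mathbf 1}}$. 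Substituting both bounds into Proposition~\ref{KaKlScSu:prop:approximation_error_additional_gain} yields the asserted $O(n^{-2})$ convergence.

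The hard part is making the spline-type approximation estimate rigorous in the precise $\calH^1_{1,\mathbf 1}$ inner product, since $V_{\bsT}$ is only \emph{nearly} a spline space and one has to track the $(t_k - 1/2)(\cdot - 1/2)$ part separately. A self-contained alternative that avoids kernel-interpolation theory altogether exploits the observation $\tfrac{1}{2n} - w_0^* = (n+3)/(2n(12n^3+n+3)) = O(n^{-3})$: this identifies $Q_{\bsT}^{\bsw^*}$ as an $O(n^{-3})\cdot\|f\|_{\infty}$ perturbation of the ``modified trapezoidal'' rule $\tfrac{f(0)}{2n} + \tfrac{1}{n}\sum_{k=1}^{n-2} f(k/n) + \tfrac{3f((n-1)/n)}{2n}$, and a short Peano-kernel / Euler--Maclaurin computation gives the $O(n^{-2})$ error of the latter on $\calH^2_{1,\mathbf 1}$, using the embedding $\calH^2_{1,\mathbf 1} \hookrightarrow C^1([0,1])$ and the $L^2$-bound on $f''$.
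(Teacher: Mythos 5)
Your primary route has a genuine gap at its central step. The worst-case-error half is fine: $\sum_k w_k^*=12n^3/(12n^3+n+3)$ indeed gives $e(Q_{\bsT}^{\bsw^*},\calH^1_{1,\mathbf 1})=\mathcal O(n^{-1})$ via \eqref{KaKlScSu:eq:wce-weights}. But the other factor, $\mathrm{dist}_{\calH^1_{1,\mathbf 1}}(f,V_{\bsT})=\mathcal O(n^{-1})\|f\|_{\calH^2_{1,\mathbf 1}}$, is precisely the quantity the paper's remark after Proposition~\ref{KaKlScSu:prop:approximation_error_additional_gain} flags as lacking a proven rate, and you cannot discharge it by citing ``standard local polynomial-approximation estimates'': $V_{\bsT}$ is an $n$-dimensional span of kernel translates, a \emph{proper} subspace of the roughly $(n+2)$-dimensional continuous piecewise-quadratic spline space, so exhibiting an element of $V_{\bsT}$ within $\mathcal O(n^{-1})$ of $f$ in the $\calH^1$ norm is exactly the nontrivial ``escaping the native space'' statement (the doubled-rate projection bounds of the cited Sloan--Kaarnioja work are only available in the periodic setting). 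You acknowledge this is ``the hard part,'' but as written the main plan proves nothing beyond what Proposition~\ref{KaKlScSu:prop:approximation_error_additional_gain} already says.

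Your fallback route, however, is essentially sound and is genuinely different from the paper's proof. Using the explicit weights, $Q_{\bsT}^{\bsw^*}$ is a perturbation of the modified trapezoidal rule $\tfrac{f(0)}{2n}+\tfrac1n\sum_{k=1}^{n-2}f(k/n)+\tfrac{3f((n-1)/n)}{2n}$; the latter differs from the standard composite trapezoidal rule by $\tfrac{1}{2n}|f(1)-f((n-1)/n)|\le\tfrac{1}{2n^2}\|f'\|_\infty$, and a Peano-kernel bound plus the embedding $\calH^2_{1,\mathbf 1}\hookrightarrow C^1$ gives $\mathcal O(n^{-2})$ for it. One quantitative slip: the aggregate weight perturbation is $\sum_k|w_k^*-\tilde w_k|=\mathcal O(n^{-2})$, not $\mathcal O(n^{-3})$ (that is the per-weight deviation); this is harmless for the $\mathcal O(n^{-2})$ conclusion but should be stated correctly, and the Peano/Euler--Maclaurin step should be written out rather than asserted. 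The paper instead argues entirely inside the RKHS: it writes the error as $\langle f,h-h_{\bsT}^{\bsw^*}\rangle_{\calH^1_{1,\mathbf 1}}$, integrates by parts to move one derivative onto the residual (this is where $f\in\calH^2_{1,\mathbf 1}$ enters), uses the interpolation property at $t_0=0$ and an explicit evaluation of $h(1)-h_{\bsT}^{\bsw^*}(1)=\mathcal O(n^{-2})$ for the boundary terms, and computes $\|h-h_{\bsT}^{\bsw^*}\|_{L^2(0,1)}$ exactly, obtaining $\mathcal O(n^{-2})$. Your elementary quadrature-theoretic argument buys simplicity and avoids these closed-form kernel computations, at the price of leaning entirely on the explicit weight formulas and giving less structural insight into why optimal weights help; the paper's computation generalizes more naturally to the kernel-cubature framework it is developing.
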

	\begin{proof}
		The quadrature error can be recast as 
		\begin{align*}
			\int_0^1 f(y)\,{\rm d}y-\sum_{k=0}^{n-1}w_k^*f(t_k)&=\int_0^1\langle f,K_{1,\mathbf 1}^1(\KaKlScSuquark,y)\rangle_{\mathcal H_{1,\mathbf 1}^1}\,{\rm d}y-\sum_{k=0}^{n-1}w_k^* \langle f,K_{1,\mathbf 1}^1(\KaKlScSuquark,t_k)\rangle_{\mathcal H_{1,\mathbf 1}^1}\\
			&=\bigg\langle f,\int_0^1 K_{1,\mathbf 1}^1(\KaKlScSuquark,y)\,{\rm d}y-\sum_{k=0}^{n-1}w_k^*K_{1,\mathbf 1}^1(\KaKlScSuquark,t_k)\bigg\rangle_{\mathcal H_{1,\mathbf 1}^1}\\
			&=\langle f,h-h_{\bsT}^{\bsw^{\ast}}\rangle_{\mathcal H_{1,\mathbf 1}^1},
		\end{align*}
		where $h:=\int_0^1 K_{1,\mathbf 1}^1(\KaKlScSuquark,y)\,{\rm d}y=\mathbbm{1}$ and $h_{\bsT}^{\bsw^{\ast}}:=\sum_{k=0}^{n-1}w_k^*K_{1,\mathbf 1}^1(\KaKlScSuquark,t_k)$ is the kernel interpolant of~$h$. Using the definition of the inner product in the weighted Sobolev space of smoothness $\alpha=1$ we have
		\begin{align*}
			\langle f,h-h_{\bsT}^{\bsw^{\ast}}\rangle_{\mathcal H_{1,\mathbf 1}^1}&=\bigg(\int_0^1f(y)\,{\rm d}y\bigg)\bigg(\int_0^1 (h(y)-h_{\bsT}^{\bsw^{\ast}}(y))\,{\rm d}y\bigg)\\
			&\quad +\int_0^1f'(y)(h'(y)-(h_{\bsT}^{\bsw^{\ast}})'(y))\,{\rm d}y.
		\end{align*}
		Using integration by parts, the absolute value of the latter integral can be estimated as
		\begin{align*}
			&
			\bigg| \int_0^1f'(y)(h'(y)-(h_{\bsT}^{\bsw^{\ast}})'(y))\,{\rm d}y \bigg|\\
			&=
			\bigg| f'(1)(h(1)-h_{\bsT}^{\bsw^{\ast}}(1))-f'(0)(h(0)-h_{\bsT}^{\bsw^{\ast}}(0))-\int_0^1 f''(y)(h(y)-h_{\bsT}^{\bsw^{\ast}}(y))\,{\rm d}y \bigg|\\
			&\leq \frac{6n}{12n^3+n+3} |f'(1)| +\|f''\|_{L^2(0,1)}\|h-h_{\bsT}^{\bsw^{\ast}}\|_{L^2(0,1)},
		\end{align*}
		where we used the fact that $h(0)-h_{\boldsymbol T}^{\boldsymbol w^*}(0)=0$, a consequence of the fact that $h_{\boldsymbol T}^{\boldsymbol w^*}$ interpolates $h$ at the lattice point $t_0=0$, as well as the identity
		\begin{align*}
			h(1)-h_{\boldsymbol T}^{\boldsymbol w^*}(1)&=1-\frac{12n^2}{12n^3+n+3}\bigg(\frac12 K_{1,\mathbf 1}^1(1,0)+\frac32 K_{1,\mathbf 1}^1\bigg(1,\frac{n-1}{n}\bigg)+\sum_{k=1}^{n-2}K_{1,\mathbf 1}^1\bigg(1,\frac{k}{n}\bigg)\bigg)\\
			&=1-\frac{12n^2}{12n^3+n+3}\bigg(\frac{29}{12} + \frac{3}{4n^2}-\frac{3}{2n}+\sum_{k=1}^{n-2}\frac{5n^2+3k^2}{6n^2}\bigg)\\
			&=1-\frac{12n^2}{12n^3+n+3}\bigg(n-\frac{5}{12n}+\frac{1}{4n^2}\bigg)\\
			&=\frac{6n}{12n^3+n+3},
		\end{align*}
		where we used $K_{1,\mathbf 1}^1\big(1,\frac{k}{n}\big)=\frac56+\frac{k^2}{2n^2}$, $k\in\{0,\ldots,n-1\}$, and $\sum_{k=1}^{n-2}k^2=\frac{(2n-3)(n-1)(n-2)}{6}$. Therefore
		$$
		|\langle f,h-h_{\bsT}^{\bsw^{\ast}}\rangle_{\mathcal H_{1,\mathbf 1}^1}|\leq \frac{6n}{12n^3+n+3}|f'(1)| + \|h-h_{\bsT}^{\bsw^{\ast}}\|_{L^2(0,1)}\big(\|f\|_{L^2(0,1)}+\|f''\|_{L^2(0,1)}\big).
		$$
		Since $\frac{6n}{12n^3+n+3}|f'(1)|=\mathcal O(n^{-2})$, it remains to assess the convergence rate of $\|h-h_{\bsT}^{\bsw^{\ast}}\|_{L^2(0,1)}$.
		Making use of the identities
		\begin{align*}
			&\sum_{k=0}^{n-1}w_k^*=\frac{24n^2}{12n^3+n+3}+\frac{12n^2(n-2)}{12n^3+n+3},\\
			&\sum_{k=0}^{n-1}\sum_{\ell=0}^{n-1}w_k^*w_\ell^* \bigg(\frac{46}{45}-\frac16 t_k^2+\frac{1}{12}t_k^3-\frac{1}{24}t_k^4+\frac14 t_k^2t_\ell -\frac16 t_\ell^2\\
			&\quad\quad\quad+\frac14 t_kt_\ell^2-\frac14 t_k^2t_\ell^2+\frac{1}{12}t_\ell^3-\frac{1}{24}t_\ell^4+\frac{1}{12}|t_k-t_\ell|^3\bigg)=\frac{720n^6+n^2+60n-45}{5(12n^3+n+3)^2},
		\end{align*}
		where the latter identity is valid for $n\geq 2$, we obtain, for $n\geq 2$,
		\begin{align*}
			\|h-&h_{\bsT}^{\bsw^{\ast}}\|_{L^2(0,1)}^2
			=
			\int_0^1 \bigg(1-\sum_{k=0}^{n-1}w_k^*K_{1,\mathbf 1}^1(x,t_k)\bigg)^2\,{\rm d}x\\
			&=\int_0^1\bigg(1-2\sum_{k=0}^{n-1}w_k^*K_{1,\mathbf 1}^1(x,t_k)+\sum_{k=0}^{n-1}\sum_{\ell=0}^{n-1}w_k^*w_\ell^* K_{1,\mathbf 1}^1(x,t_k)K_{1,\mathbf1}^1(x,t_\ell)\bigg)\,{\rm d}x\\
			&=1-2\sum_{k=0}^{n-1}w_k^*+\sum_{k=0}^{n-1}\sum_{\ell=0}^{n-1}w_k^*w_\ell^* \bigg(\frac{46}{45}-\frac16 t_k^2+\frac{1}{12}t_k^3-\frac{1}{24}t_k^4+\frac14 t_k^2t_\ell -\frac16 t_\ell^2\\
			&\quad\quad\!\!+\frac14 t_kt_\ell^2-\frac14 t_k^2t_\ell^2+\frac{1}{12}t_\ell^3-\frac{1}{24}t_\ell^4+\frac{1}{12}|t_k-t_\ell|^3\bigg)
			\\
			&=\frac{6n(n+15)}{5(12n^3+n+3)^2}.
		\end{align*}
		Hence, $|If-Q_{\bsT}^{\boldsymbol w^*}f|=\mathcal O(n^{-2})$ as claimed.\qed
	\end{proof}
	Lemma~\ref{KaKlScSu:lemma:doubled} illustrates that using kernel cubature can improve the error convergence rate of a lattice rule in one dimension.

	\section{Numerical Experiments}
	\label{KaKlScSu:sec:numerics}
	
	We briefly describe the construction of the Gramian matrix $\mathcal K_{\boldsymbol T}$ for common types of coordinate weights and analyze the computational complexity in~Section~\ref{KaKlScSu:sec:assembly}. We then investigate the behavior of kernel cubature applied to QMC point sets in two experiments. In Section~\ref{KaKlScSu:sec:pdenumex}, we compare the performance of equally weighted lattice rules against kernel cubature constructed for the same point sets, applied to an elliptic partial differential equation (PDE) with a parametric input coefficient, while Section~\ref{KaKlScSu:subsec:numerics_WCE} compares the worst-case errors of equally weighted and optimally weighted QMC point sets in unweighted Sobolev spaces $\mathcal H_{s,\mathbf 1}^{\alpha}$ with varying smoothness parameters $\alpha$.\newpage

	\subsection{Construction of the Gramian Matrix and Its Computational Complexity}\label{KaKlScSu:sec:assembly}
	
	In the context of information-based complexity and uncertainty quantification, it is often the case to consider the dominant cost to be the $n$ evaluations of the integrand $f$, see e.g., \cite{KaKlScSu:LMRS2021,KaKlScSu:U2017}. 
	By contrast, the construction of the lattice rule and the computation of the associated weights are typically negligible in comparison and can often be performed ``offline''.
	
	That said, for completeness, we provide here a brief analysis of the computational cost associated with constructing the cubature weights, which involves two main steps:  
	(1) assembling the Gramian matrix $\calK_{\bsT}$, and  
	(2) solving the resulting linear system.
	The second step, solving the resulting linear system, has a computational complexity of $\mathcal{O}(n^3)$ using standard direct methods such as Cholesky decomposition, which is appropriate here given that the kernel matrix is symmetric and positive definite.
	We now turn our attention to the first step: computing entries of the kernel matrix, i.e., evaluating $K_{s,\bsgamma}^{\alpha}(\bsx, \bsy)$ for given $\bsx, \bsy \in [0,1]^s$.
	Recall that
	$$
	K_{s,\bsgamma}^{\alpha}(\bsx,\bsy)=\sum_{\setu\subseteq\{1,\ldots,s\}}\gamma_{\setu}\prod_{j\in\setu}\eta_{\alpha}(x_j,y_j),\label{KaKlScSu:eq:generickernel}
	$$
	where $\eta_{\alpha}(x,y)=\sum_{\tau=1}^{\alpha}\frac{1}{(\tau!)^2}B_{\tau}(x)B_{\tau}(y)+\frac{(-1)^{\alpha+1}}{(2\alpha)!}\widetilde B_{2\alpha}(x-y)$. 
	The cost of evaluating this kernel depends on the structure of the coordinate weights $\bsgamma=(\gamma_{\setu})_{\setu\subseteq\{1,\ldots,s\}}$. We refer to~\cite[Section~5.2]{KaKlScSu:KKKNS2022} for details and provide a brief overview below. In what follows, it is assumed that evaluating $\eta_\alpha$ has a constant cost. We also use the convention that a product over an empty set is defined to be equal to 1.
	\begin{itemize}
		\item \emph{Product weights} $\gamma_{\setu}=\prod_{j\in\setu}\widetilde\gamma_j$ are specified by a sequence of nonnegative numbers $(\widetilde\gamma_j)_{j=1}^s$, and the kernel can be equivalently written as
		$$
		K_{s,\bsgamma}^{\alpha}(\bsx,\bsy)=\prod_{j=1}^s (1+\widetilde\gamma_j\eta_\alpha(x_j,y_j)).
		$$
		This expression can be evaluated in $\mathcal O(s)$ time for one pair $(\bsx,\bsy)$ and, for any point set $\boldsymbol T=(\boldsymbol t_k)_{k=0}^{n-1}$ in $[0,1]^s$, the matrix $\mathcal K_{\boldsymbol T}=(K_{s,\boldsymbol\gamma}^{\alpha}(\bst_k,\bst_\ell))_{k,\ell=0}^{n-1}$ can be assembled in $\mathcal O(sn^2)$ time.
		\item \emph{Product-and-order dependent (POD) weights} $\gamma_{\setu}=\Gamma_{|\setu|}\prod_{j\in\setu}\widetilde\gamma_j$ are specified by two sequences of nonnegative numbers $(\Gamma_k)_{k=0}^s$ and $(\widetilde\gamma_j)_{j=1}^s$, and the kernel can be equivalently written as
		\begin{align}
			K_{s,\bsgamma}^{\alpha}(\bsx,\bsy)=\sum_{\ell=0}^s \Gamma_{\ell}P_{s,\ell},\label{KaKlScSu:eq:podkern}
		\end{align}
		where the sequence $(P_{k,\ell})_{k,\ell=0}^s$ can be computed recursively by
		\begin{align*}
			&P_{k,0}=1\quad\text{for all}~k\in\{0,\ldots,s\},\\
			&P_{k,\ell}=0\quad\text{for all}~k\in\{0,\ldots,s\}~\text{and}~\ell\in\{k+1,\ldots,s\},\\
			&P_{k,\ell}=P_{k-1,\ell}+\widetilde\gamma_k \eta_\alpha(x_k,y_k)P_{k-1,\ell-1}~\text{for all}~k\in\{1,\ldots,s\}~\text{and}~\ell\in\{1,\ldots,k\}.
		\end{align*}
		The cost to obtain $K_{s,\bsgamma}^\alpha(\bsx,\bsy)$ using the expression~\eqref{KaKlScSu:eq:podkern} is $\mathcal O(s^2)$ for one pair $(\bsx,\bsy)$  and, for any point set $\boldsymbol T=(\boldsymbol t_k)_{k=0}^{n-1}$ in $[0,1]^s$, the matrix $\mathcal K_{\boldsymbol T}=(K_{s,\boldsymbol\gamma}^{\alpha}(\bst_k,\bst_\ell))_{k,\ell=0}^{n-1}$ can be assembled in $\mathcal O(s^2n^2)$ time.
	\end{itemize}
	Since the system matrix $\mathcal K_{\boldsymbol T}$ is symmetric, it has at most  $\frac{n(n+1)}{2}$ unique entries that need to be constructed. The cost of solving the matrix equation~\eqref{KaKlScSu:eq:gramsystem} is $\mathcal O(n^3)$ independently of the weights $\bsgamma$ and dimension $s$.
	
	It is possible to obtain recurrence formulas for other classes of coordinate weights: for example, using {\em smoothness-driven product-and-order dependent (SPOD)} weights with smoothness degree $\sigma\in\mathbb N$, the cost to obtain $K_{s,\boldsymbol\gamma}^{\alpha}(\boldsymbol x,\boldsymbol y)$ is $\mathcal O(s^2\sigma^2)$ for one pair $(\boldsymbol x,\boldsymbol y)$ and, for any point set $\boldsymbol T=(\boldsymbol t_k)_{k=0}^{n-1}$ in $[0,1]^s$, the matrix $\mathcal K_{\boldsymbol T}=(K_{s,\boldsymbol\gamma}^{\alpha}(\bst_k,\bst_\ell))_{k,\ell=0}^{n-1}$ can be assembled in $\mathcal O(s^2\sigma^2n^2)$ time. For details, we refer to~\cite[Section~5.2]{KaKlScSu:KKKNS2022}.

	\subsection{Application to PDE Uncertainty Quantification}
	\label{KaKlScSu:sec:pdenumex}
	
	Let $\Omega=(0,1)^2$. We consider the elliptic PDE
	\begin{align}
		\begin{cases}
			-\nabla \KaKlScSuquark (a(\boldsymbol x,\boldsymbol y)\nabla u(\boldsymbol x,\boldsymbol y))=f(\boldsymbol x),&\boldsymbol x\in \Omega,~\boldsymbol y\in[-\tfrac12,\tfrac12]^s,\\
			u(\boldsymbol x,\boldsymbol y)=0,&\boldsymbol x\in\partial \Omega,~\boldsymbol y\in[-\tfrac12,\tfrac12]^s,
		\end{cases}\label{KaKlScSu:eq:pde}
	\end{align}
	equipped with the parametric diffusion coefficient
	\begin{align}
		\label{KaKlScSu:eq:pde2}
		a(\boldsymbol x,\boldsymbol y)=\frac12+\frac12\sum_{j=1}^s j^{-2}y_j\sin(j\pi x_{1})\sin(j\pi x_{2})
	\end{align}
	for $\boldsymbol x=(x_1,x_2)\in \Omega$ and $\boldsymbol y=(y_1,\ldots,y_s)\in[-\tfrac12,\tfrac12]^s$, where $s$ is referred to as the truncation dimension. 
	In this case, it can be shown (cf.\ \cite{KaKlScSu:kss12}) that, using the POD coordinate weights
	\begin{align}\label{KaKlScSu:eq:podweights}
		\gamma_{\setu}=\bigg(|\setu|!\prod_{j\in\setu}\frac{b_j}{\sqrt{{2\zeta(2\lambda)}/{(2\pi^2)^\lambda}}}\bigg)^{\frac{2}{1+\lambda}}\quad \text{for all}~\setu\subseteq\{1,\ldots,s\},
	\end{align}
	with $b_j=(1-\frac12\zeta(2))^{-1} j^{-2}$ and $\lambda=\frac{1}{2-2\delta}$, $\delta=0.05$, randomly shifted rank-1 lattice rules achieve \emph{dimension-independent} convergence rates for the root-mean-square integration error of the expected value
	\begin{align}
		\mathbb E[G(u)]=\int_{[-\frac12,\frac12]^s}G(u(\KaKlScSuquark,\boldsymbol y))\,{\mathrm d}\boldsymbol y,\label{KaKlScSu:eq:expectedvalue}
	\end{align}
	where $G\!:H_0^1(\Omega)\to\mathbb R$ is an arbitrary bounded linear functional (the quantity of interest).
	The PDE~\eqref{KaKlScSu:eq:pde} was discretized using a first-order finite element method with mesh width $h=2^{-5}$.

	\begin{figure}[!t]
		\begin{center}
			\includegraphics[height=.468\textwidth]{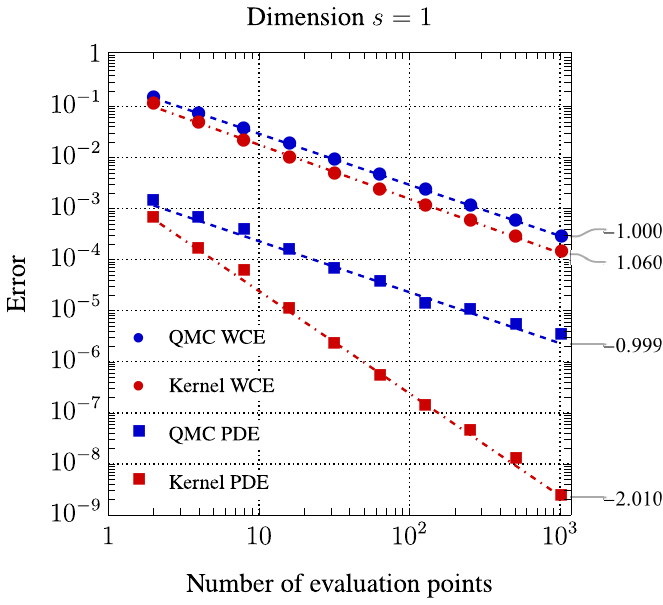}~\includegraphics[height=.468\textwidth,trim=.75cm 0cm 0cm 0cm,clip]{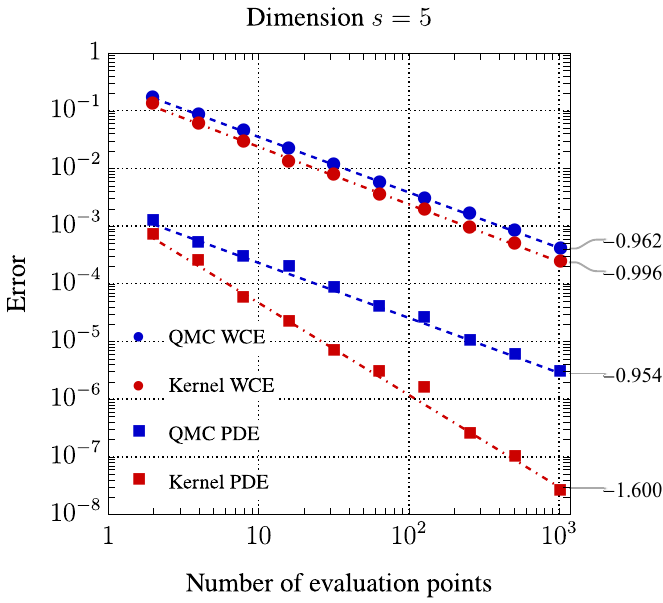}\\[.25cm]
			\includegraphics[height=.468\textwidth]{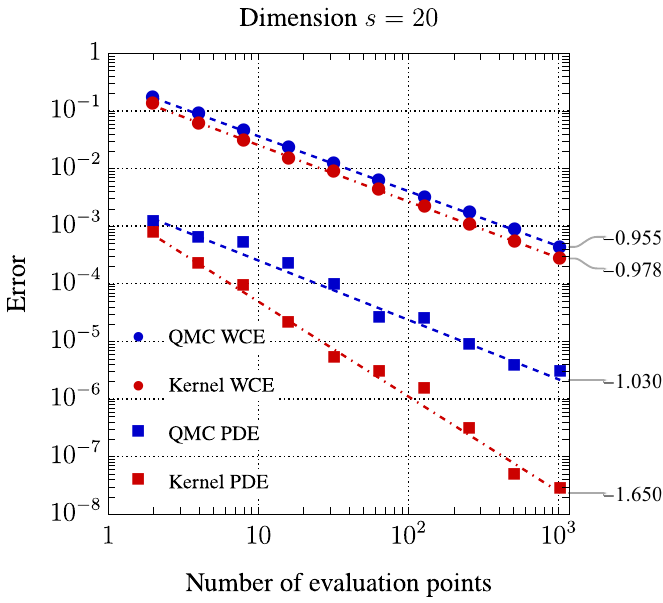}~\includegraphics[height=.468\textwidth,trim=.75cm 0cm 0cm 0cm,clip]{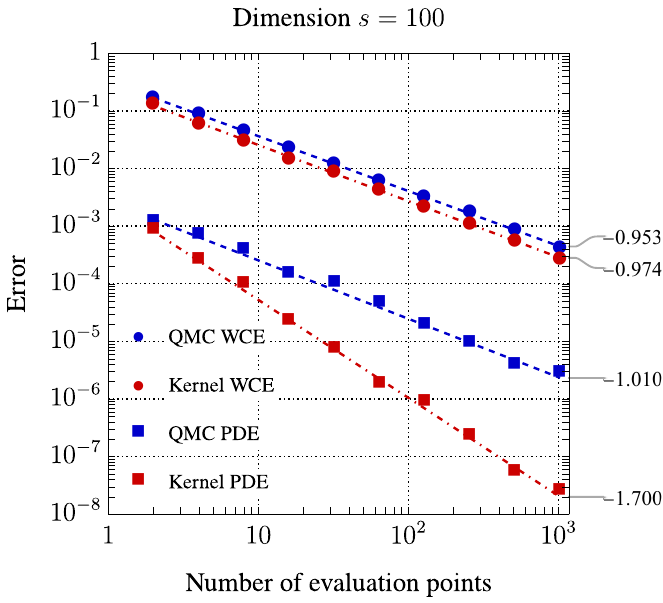}
		\end{center}
		\caption{The cubature errors for the PDE example~\eqref{KaKlScSu:eq:pde}--\eqref{KaKlScSu:eq:pde2} were computed using both equally weighted rank-1 lattice rules (``QMC PDE'') and kernel cubature rules (``Kernel PDE''), with both methods evaluated on the same lattice point sets. We used $s\in\{1,5,20,100\}$ as the truncation dimensions. The weights for the kernel cubature were obtained subject to $\mathcal H_{s,\bsgamma}^1$ and we also illustrate the computed worst-case errors for both equally weighted lattice rules (``QMC WCE'') and the corresponding kernel cubature rules (``Kernel WCE'') in the space $\mathcal H_{s,\bsgamma}^1$. The cubature errors have been averaged over $R=8$ random shifts.}
		\label{KaKlScSu:fig:numex}
	\end{figure}

	We set $s\in\{1,5,20,100\}$ as the truncation dimension in \eqref{KaKlScSu:eq:pde2}, choose $f(\boldsymbol x)=x_1$ as the source term, and set $G(v)=\int_{\Omega}v(\boldsymbol x)\,{\mathrm d}\boldsymbol x$ as the quantity of interest. We used the fast component-by-component algorithm~\cite{KaKlScSu:qmc4pde} to find an extensible generating vector for $n=2^k$, $k=1,\ldots,12$, corresponding to the POD coordinate weights~\eqref{KaKlScSu:eq:podweights}. We approximated the expected value~\eqref{KaKlScSu:eq:expectedvalue} using two methods based on the same fixed lattice rule:\medskip
	\begin{enumerate}
		\item Equally weighted lattice QMC rules for $n=2^k$, $k=1,\ldots,10$;
		\item Kernel cubature rules over the same lattice points for $n=2^k$, $k=1,\ldots,10$.
	\end{enumerate}\medskip%
	For error estimation, we applied $R=8$ random shifts to each point set. As the reference solution, we used the numerical approximations corresponding to an $n=2^{12}$ point QMC rule for experiment 1 and an $n=2^{12}$ point kernel cubature point set for experiment 2. To obtain the weights $\boldsymbol w^*$ of the randomly shifted kernel cubature rules, we solved the linear system~\eqref{KaKlScSu:eq:gramsystem} for each randomly shifted lattice point set corresponding to the kernel~\eqref{KaKlScSu:eq:kern} with $\alpha=1$ and POD coordinate weights~\eqref{KaKlScSu:eq:podweights}.
	In this case, it is necessary to assemble the elements of the Gram matrix $\mathcal K_{\bsT}$ recursively; see Section~\ref{KaKlScSu:sec:assembly} for details. In addition, we computed the worst-case errors for both methods using the square root of the expression~\eqref{KaKlScSu:eq:wce-weights}. The results are displayed in Figure~\ref{KaKlScSu:fig:numex}.

	While the kernel cubature does not improve the essentially linear worst-case error rate of the underlying rank-1 lattice point set, the numerical results seem to indicate that the kernel cubature rate is significantly better. For dimension $s=1$, the kernel cubature rate for the PDE problem is double that of the equally weighted cubature rule (as indicated by Lemma~\ref{KaKlScSu:lemma:doubled}) while for increased dimensions $s$ the observed cubature convergence rate lies approximately between $-1.6$ and $-1.7$. This improvement may be attributed to the error decomposition presented in Proposition~\ref{KaKlScSu:prop:approximation_error_additional_gain}: if the distance between $f$ and $V_{\boldsymbol T}$ is decreasing as $n$ grows, then the observed kernel cubature rate can exceed the worst-case error rate by a significant amount.

	\subsection{Worst-Case Errors for Sobolev Spaces of Varying Smoothness}
	\label{KaKlScSu:subsec:numerics_WCE}
	
	In this subsection, we demonstrate improved convergence rates of lattice rules with optimized weights by directly calculating worst-case errors.
	We first optimize the weight for $\calH^2_{s,\bsone}$ and calculate the worst-case error.
	We consider shifted and tent-transformed lattice rules $\bsT = \phi(\{\tilde{\bsT} + \Delta\})$ as described in Section~\ref{KaKlScSu:subsec:lattice_rules_tent_transform}, where a single random shift $\Delta$ is added to the lattice before the transformation, preventing points from coinciding after the transform is applied (which would render the Gram matrix $\calK_{\bsT}$ singular).
	It is shown in \cite[Corollary~1]{KaKlScSu:GSY2019} that second order convergence can be achieved by a tent-transformed lattice rule in $\calH^{2}_{s,\boldsymbol{\gamma}}$.
	In this numerical example we show that the optimally weighted tent-transformed lattice rule can achieve even faster convergence in $\calH^{4}_{2,\bsone}$. 
	\begin{figure}[!h]
		\begin{center}
			\includegraphics[width=.69\textwidth]{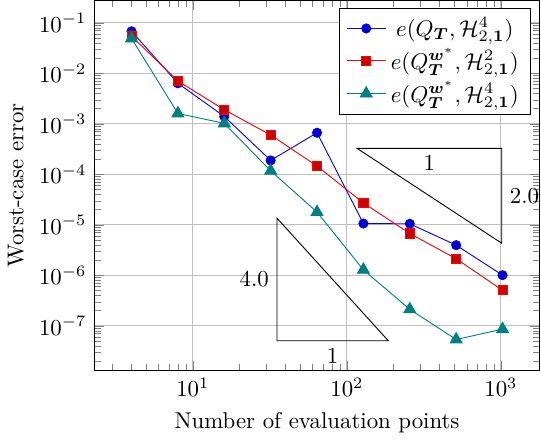}
			\caption{Worst-case errors for tent-transformed lattice rules. Here, $Q_{\bsT}$ and $Q_{\bsT}^{\bsw^*}$ denote lattice rules based on the same shifted, tent-transformed lattice, using equal weights and optimized weights, respectively. The weights $\bsw^*$ are optimized for the second-order Sobolev space $\calH^2_{2,\bsone}$. While $Q_{\bsT}$ is expected to achieve second-order convergence, the observation that $Q_{\bsT}^{\bsw^*}$ attains a convergence rate higher than two appears to be previously unreported.}\label{KaKlScSu:fig:wce1}
		\end{center}
	\end{figure}
	Figure~\ref{KaKlScSu:fig:wce1} shows the two-dimensional case.
	We use the generating vector $\bsz=(1,182667)$ for $n=2^2,\ldots,2^{10}$, from \cite{KaKlScSu:CKN2006} (available online at Frances Kuo's webpage \cite{KaKlScSu:FrancesPage}). 
	We remind the reader that the worst-case errors can be easily calculated by~\eqref{KaKlScSu:eq:wce_general} and~\eqref{KaKlScSu:eq:wce-weights}, i.e.,
	\begin{align*}
		e(Q_{\bsT}^{\bsw^*} , \calH^{\alpha}_{s,\boldsymbol{\gamma}})^{2}
		&=
		1-\sum_{k=0}^{n-1} w_k^{\ast},
		\\
		e(Q_{\bsT} , \calH^{\alpha}_{s,\boldsymbol{\gamma}})^{2}
		&=
		-1+n^{-2}\kern-2mm\sum_{k,k'=0} ^{n-1} K^{\alpha}_{s,\boldsymbol{\gamma}} (\bst_k,\bst_{k'}),
	\end{align*}
	and
	\[
	e(Q_{\bsT}^{\bsw^*} , \calH^{2\alpha}_{s,\boldsymbol{\gamma}})^{2}
	=
	1-2\sum_{k=0}^{n-1} w_k^{\ast}+\sum_{k=0}^{n-1}\sum_{k'=0}^{n-1} w^{\ast}_{k}w^{\ast}_{k'} K^{2\alpha}_{s,\boldsymbol{\gamma}} (\bst_k,\bst_{k'}),
	\]
	where in the last line, the weights $\bsw^{\ast}$ are optimized for $ \calH^{\alpha}_{s,\boldsymbol{\gamma}}$, not $ \calH^{2\alpha}_{s,\boldsymbol{\gamma}}$.
	
	We observe that a convergence rate faster than second order is attained.
	Note that there are other ways to achieve convergence faster than $\mathcal{O}(n^{-2})$ using lattice rules for non-periodic functions. One example is the \emph{periodization strategy}, which uses a change of variables in order to obtain a periodic integrand from a non-periodic one. However, it is not known how to avoid the curse of dimensionality when using this strategy, see \cite{KaKlScSu:KSW2007}. Another example is the symmetrized lattice rule \cite[Corollary~2]{KaKlScSu:GSY2019}, but this is also cursed by dimensionality because the required number of points grows exponentially with the dimension.

	\section{Concluding Remarks}
	\label{KaKlScSu:sec:conclusion}
	
	In this paper, we introduced a weighted version of QMC cubature, referred to as \emph{kernel cubature}, where the weights are chosen to minimize the distance between the kernel mean embeddings of the true and empirical distributions.  
	We provided a theoretical result (Proposition~\ref{KaKlScSu:prop:approximation_error_additional_gain}) suggesting an improved convergence rate for kernel cubature compared to the equally weighted case, proved a corresponding statement for dimension $s=1$, and presented numerical results in dimensions $s = 1, 5, 20,$ and $100$, focusing on lattice rules and an elliptic PDE problem with a random coefficient.  
	Additionally, we explored the behavior of the worst-case error when weights optimized for a Sobolev space of dominating-mixed smoothness $\alpha$ were applied to a space of higher smoothness. In dimension $s=2$, this led to a significant acceleration in the convergence rate.
	
	While this work established several theoretical insights, its primary focus was experimental.
	Future research will aim to establish improved convergence guarantees in arbitrary dimensions, as well as investigate constructions of lattices and other QMC point sets specifically designed for kernel cubature.

	\section*{Acknowledgments}
	IK and CS were funded by the Deutsche Forschungsgemeinschaft (DFG, German Research Foundation) under Germany's Excellence Strategy (EXC-2046/1, project 390685689) through project EF1-19 of the Berlin Mathematics Research Center MATH+.
	This work of YS was supported by the Research Council of Finland (decisions 348503 and 359181). The work of VK was supported by the Research Council of Finland (Flagship of Advanced Mathematics for Sensing, Imaging and Modelling grant 359183).
	We thank Frances Kuo, Fred Hickernell and Robert Gruhlke for helpful collegial discussions.

\end{document}